\title{A note on two-colorability of nonuniform hypergraphs}
\author{Lech Duraj}
\author{Grzegorz Gutowski}
\author{Jakub Kozik}
\address{Theoretical Computer Science Department, Faculty of Mathematics and Computer Science, Jagiellonian University, Krak\'{o}w, Poland}
\email{\{Lech.Duraj, Grzegorz.Gutowski, Jakub.Kozik\}@uj.edu.pl}
\thanks{This work was partially supported by Polish National Science Center (2016/21/B/ST6/02165)}
\subjclass{G.2.1 Combinatorics, G.2.2 Graph Theory, G.3 Probability and Statistics, F.2.2 Nonnumerical Algorithms and Problems}
\keywords{Property B, Nonuniform Hypergraphs, Hypergraph Coloring, Random Greedy Coloring}
\newtheorem{theorem}{Theorem}[section]
\newtheorem{lemma}[theorem]{Lemma}
\newtheorem{prop}[theorem]{Proposition}
\renewcommand{\epsilon}{\varepsilon}
\renewcommand{\leq}{\leqslant}
\renewcommand{\geq}{\geqslant}
\newcommand{\expt}{\mathbb{E}}
\newcommand{\prob}{\mathrm{Pr}}
\newcommand{\probG}{\mathrm{Pr}_\mathcal{G}}
\newcommand{\minedge}{s_{\min}}
\begin{document}

\begin{abstract}
For a hypergraph $H$, let $q(H)$ denote the expected number of monochromatic edges when the color of each vertex in $H$ is sampled uniformly at random from the set of size 2.
Let $\minedge(H)$ denote the minimum size of an edge in $H$.
Erd\H{o}s asked in 1963 whether there exists an unbounded function $g(k)$ such that any hypergraph $H$ with $\minedge(H) \geq k$ and $q(H) \leq g(k)$ is two colorable.
Beck in 1978 answered this question in the affirmative for a function $g(k) = \Theta(\log^* k)$.
We improve this result by showing that, for an absolute constant $\delta>0$, a version of random greedy coloring procedure is likely to find a proper two coloring for any hypergraph $H$ with $\minedge(H) \geq k$ and $q(H) \leq \delta \cdot \log k$.
\end{abstract}

\maketitle

\newcommand{\ca}{\alpha_\mathcal{A}}
\newcommand{\cc}{\alpha_\mathcal{C}}
\newcommand{\cd}{\alpha_\mathcal{D}}

\newcommand{\rg}{\mathcal{R}_3}

\newcommand{\cp}{\alpha_\mathcal{B}}

\section{Introduction}
A \emph{hypergraph} $H=(V,E)$ is a finite set of vertices $V$ and a set of edges $E$ where each edge is a set of at least two vertices.
A \emph{two coloring} of $H$ is an assignment of color blue or red to each vertex in $H$.
A coloring is \emph{proper} if each edge in $H$ contains both a vertex colored blue and a vertex colored red.
We say that $H$ is \emph{two colorable} if it admits a proper two coloring.
Hypergraph $H$ is \emph{$k$-uniform} if every edge in $H$ has size exactly $k$ -- we also say that $H$ is a \emph{$k$-graph}.
For every $n \in \mathbb{N}$, the set $\{1,\ldots,n\}$ is denoted by $[n]$.
We use standard $O$-notation to describe asymptotic properties of various functions.

One of the most classical problems in the extremal combinatorics is to find the minimum number of edges $m(k)$ in a $k$-uniform hypergraph that is not two colorable.
The research on this problem has been started in the 60s by Erd\H{o}s and Hajnal \cite{ErdHaj61}, who used the term \emph{Property~B} for two colorability.
Today, by the result of Radhakrishnan and Srinivasan \cite{RS00}, we know that $m(k) = \Omega((k/\log k)^{1/2})\cdot 2^{k}$.
The best known upper bound, proved by Erd\H{o}s \cite{Erd1964} in 1964, is $m(k)= O(k^2)\cdot 2^{k}$.
This upper bound follows from the fact that a random $k$-graph with $k^2$ vertices and $O(k^2)\cdot 2^k$ edges is very unlikely to be two colorable.
Interestingly, known deterministic constructions require much larger structures -- the best one is by Gebauer \cite{Gebauer13} and gives a not two colorable $k$-graph with roughly $2^{k+k^{2/3}}$ edges.

Lov\'{a}sz \cite{Lov73} proved that for $k\geq 3$, the problem of deciding if a given $k$-graph is two colorable is NP-complete.
For $k$-graphs with the number of edges smaller than $m(k)$ the decission problem is trivial -- by the definition they are all two colorable.
Nevertheless, constructing a two coloring of such $k$-graphs is not necessarily an easy task.
Luckily, the known lower bounds for $m(k)$ are constructive.
In fact, the bound of \cite{RS00} is proved by showing that some randomized coloring procedure succeeds with high probability for the considered hypergraphs.
Cherkashin and Kozik \cite{CK15} showed that the same bound is obtained by the analysis of a random greedy algorithm
(i.e., a procedure that colors the vertices of a hypergraph in a random order and assigns color blue to each vertex unless it is the last vertex of a monochromatic blue edge -- only then color red is assigned).


 
For a hypergraph $H=(V,E)$, let $q(H)$ denote the expected number of monochromatic edges when the color of each vertex is sampled uniformly at random.
Clearly, for a $k$-graph $H$, we have $q(H)= |E| \cdot 2^{-k+1}$, and determining the value of $m(k)$ is equivalent to finding a not two colorable $k$-graph $H$ with the minimal possible value of $q(H)$.
This formulation allows for a neat generalization of the question to hypergraphs with edges of arbitrary sizes (i.e., nonuniform hypergraphs).
For a hypergraph $H=(V,E)$, let $\minedge(H) = \min_{e \in E} |e|$ and observe that $q(H) = \sum_{e\in E} 2^{-|e|+1}$.
Erd\H{o}s \cite{Erd1963, EL1975} asked whether there exists an unbounded function $g$ such that any hypergraph $H$ with $\minedge(H) \geq k$ and $q(H)\leq g(k)$ is two colorable.
A positive answer has been given in 1978 by Beck \cite{Beck78} who proved the result for $g(k)= \Theta(\log^*(k))$.
This has not been improved since then.
(In 2008 L. Lu announced a proof of a bound $\Omega(\log(k)/\log\log(k)$ but it turned out to work only for simple hypergraphs.
Shabanov in \cite{Shab15} improved the bound for this class to $\Omega(\sqrt{k})$.)
In this paper we prove the same result for $g(k) = \Theta(\log(k))$.
The random construction of a not two colorable $k$-graph by Erd\H{o}s \cite{Erd1964} shows that the best possible $g$ is $O(k^2)$, even when restricted to uniform hypergraphs.
Interestingly, there are no better nonuniform constructions known.
Our main result is the following.

\begin{theorem}
\label{thm:main}
    There exists a constant $\delta>0$ such that for all sufficiently large $k$, any hypergraph $H=(V,E)$ with $\minedge(H) \geq k$ and $q(H) \leq \delta \cdot \log k$ is two colorable.
\end{theorem}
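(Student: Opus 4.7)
The plan is to introduce a randomized coloring procedure $\rg$ following the random greedy paradigm, and to show that, for a small absolute constant $\delta>0$, the expected number of monochromatic edges under $\rg$ is strictly less than one whenever $\minedge(H)\ge k$ and $q(H)\le\delta\log k$. I would equip each vertex $v$ with an independent uniform threshold $\pi(v)\in[0,1]$, fix parameters $0<\ca<\cp<\cc<\cd<1$ taken symmetric around $1/2$, and process vertices in the increasing order of $\pi$. These parameters partition $[0,1]$ into zones: in the outer zones $[0,\ca]$ and $(\cd,1]$ the color is fixed (blue and red respectively), while in the greedy zones $(\ca,\cp]$ and $(\cc,\cd]$ the default color (blue on the low side, red on the high side) is overridden only to avoid completing a monochromatic edge of that color. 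The narrow inner zone $(\cp,\cc]$ around $1/2$ is handled by a balanced rule that can flip in either direction, providing the slack needed to treat very small edges.

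To bound the probability that a given edge $e$ is monochromatic I would split the bad event by the final color; by symmetry of the procedure it suffices to analyze the monochromatic-blue case. For $e$ to be all blue, every $v\in e$ whose $\pi(v)$ lies in a red-default zone had to be flipped to blue by the greedy rule, which is witnessed by a ``blocking edge'' $f_v\ni v$ whose other vertices were already all colored blue at the moment $v$ was processed. Summing over all tuples $(e,(f_v)_{v\in e})$ and integrating out the order induced by $\pi$, each blocking edge $f$ contributes a factor proportional to $2^{-|f|+1}$, which is exactly the contribution of $f$ to $q(H)$. The expected number of bad edges thus reduces to a manageable expression in which each factor is a sum over edges weighted by their $q(H)$-contribution.

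The main obstacle I foresee is handling edge sizes that vary by orders of magnitude: a single edge of size exactly $k$ may be blocked by a much larger edge whose own contribution to $q(H)$ is negligible yet which has abundant vertices to serve as blockers. To control this I would partition $E$ into geometric size classes $E_i=\{e\in E:|e|\in[2^{i-1}k,2^ik)\}$, bound the expected number of monochromatic edges from each class separately, and sum geometrically. Edges with $|e|\gg k\log k$ are already tamed by the factor $2^{-|e|}$, so only $O(\log k)$ classes contribute nontrivially; this is precisely where the factor $\log k$ in the hypothesis becomes visible. A final tuning of the thresholds -- e.g.\ $\ca$ and $1-\cd$ of order $1/k$, the greedy zones of width close to $1/2$, and the middle zone of width $\Theta(1/\log k)$ -- should yield a per-class bound of the shape $k^{-\Omega(1)}\cdot q(H)^{O(1)}$, which sums to $o(1)$ provided $\delta$ is chosen as a sufficiently small absolute constant.
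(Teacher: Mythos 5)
Your proposal is a genuinely different procedure from the paper's (a one-pass, zone-based greedy in threshold order, versus the paper's two-phase scheme of independent fair-coin initial colors plus a weight-ordered recoloring pass), and it contains a gap at exactly the point that makes the nonuniform problem hard. The critical claim is that ``each blocking edge $f$ contributes a factor proportional to $2^{-|f|+1}$.'' With fixed zone boundaries this is false for large blockers. If a vertex $v$ of $e$ sits at threshold $\pi(v)=x$ in a red-default zone and is flipped by a blocking edge $f_v$, the witness requires the other $|f_v|-1$ vertices of $f_v$ to precede $v$ and carry the completing color; the resulting factor is of the form $(x/2+\epsilon)^{|f_v|-1}$, where $\epsilon$ measures how far your zone boundaries sit from their ideal positions. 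This equals $2^{-|f_v|+1}$ only up to a multiplicative error $e^{\Theta(\epsilon|f_v|)}$, which is uncontrolled once $|f_v|\gg 1/\epsilon$; no single choice of the boundaries $\ca,\cp,\cc,\cd$ (widths $1/k$, $1/2$, $\Theta(1/\log k)$, \dots) works for all blocker sizes simultaneously. Your geometric size classes $E_i$ do not repair this, because they stratify the \emph{target} edge $e$, whereas the blow-up comes from the size of the \emph{blocker} $f_v$ -- the very obstacle you name and then do not resolve. The paper's own remark on bounded maximal size makes this failure quantitative: the one-pass greedy analysis degrades as $(4q)^{K/k}$ where $K$ is the maximum edge size, and is useless once $K$ exceeds roughly $k\log k$. (There is also an internal inconsistency: with red as the default being overridden to avoid completing a monochromatic \emph{red} edge, the witnessing edge's other vertices are all red, not ``all colored blue'' as you wrote.)

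What the paper does instead, and what you would need an analogue of, is \emph{size-dependent} control of the blockers. First, decoupling the colors from the order (independent fair coins for initial colors, then a recoloring pass in weight order) makes the probability that a prescribed subset of $e$ is initially blue exactly $2^{-|e|}$, cleanly extracting the factor $2^{-|e|}$ without zone tuning. Second, the blocker contributions are tamed by events whose thresholds scale with the blocker's size: event $\mathcal{B}$ forces a flipped vertex endangered by a size-$j$ edge to have weight at least $1-\ln(\cp q)/j$, and, for the full $O(\log k)$ bound, the second weight deficit $d_2(f)=(|f|+1)(1-w_2(f))$ gives each blocker a contribution $\delta_j/(|f_{v_j}|+1)$ with a global budget $\sum_j\delta_j\le\cd q$; an AM--GM step and a convexity lemma (applied to $\cosh(\sqrt{2\cd q\,y})$ with $\expt[Y_e]\le q/k$) then convert this into the per-edge bound $\frac{1}{3q2^{|e|-1}}$. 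Without some mechanism of this kind -- a threshold or budget that shrinks like $1/|f|$ with the blocker's size -- your per-blocker factor cannot be tied to $f$'s contribution to $q(H)$, and the expected number of monochromatic edges cannot be brought below one.
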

Moreover, we prove the theorem by showing that a version of a random greedy coloring procedure succeeds with positive probability for these hypergraphs.

\section{Basic notions and the coloring procedure}

\subsection{Tools}

We start with a simple lemma on convex functions of random variables. 

\begin{lemma}
\label{lemma:conv-expt}
Let $X$ be a nonnegative random variable such that $0 \leq X \leq M$ and $\expt[X] \leq \lambda M$ for some $M \geq 0$, and $0 \leq \lambda \leq 1$. 
Then, for any convex function $f:[0, M] \to [0, \infty)$ with $f(M) \geq f(0)$, the following inequality holds
 
    \[
    \expt[f(X)] \leq \lambda f(M) + (1 - \lambda) f(0)\text{.} 
    \]
 
\end{lemma}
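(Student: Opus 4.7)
The plan is to reduce this to two applications of convexity. First I would handle the degenerate case $M=0$ separately: then $X=0$ almost surely and both sides equal $f(0)$. So assume $M>0$.

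The main step is to use the convexity of $f$ on the interval $[0,M]$ to obtain, for every $x\in[0,M]$, the pointwise bound
\[
f(x) \;=\; f\!\left(\tfrac{x}{M}\cdot M + \bigl(1-\tfrac{x}{M}\bigr)\cdot 0\right) \;\leq\; \tfrac{x}{M}\,f(M) + \bigl(1-\tfrac{x}{M}\bigr)f(0)\text{.}
\]
Applying this with $x = X$ and taking expectations (using linearity) yields
\[
\expt[f(X)] \;\leq\; \tfrac{\expt[X]}{M}\,f(M) + \Bigl(1-\tfrac{\expt[X]}{M}\Bigr)f(0)\text{.}
\]
Set $\mu = \expt[X]/M$; by hypothesis $0\le \mu\le \lambda \le 1$.

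The final step is to pass from $\mu$ to $\lambda$. Writing the right-hand side as $f(0) + \mu\bigl(f(M)-f(0)\bigr)$ and using the assumption $f(M)\ge f(0)$, the function $t\mapsto f(0)+t\bigl(f(M)-f(0)\bigr)$ is nondecreasing in $t$, so replacing $\mu$ by the larger value $\lambda$ only increases it. This gives
\[
\expt[f(X)] \;\leq\; f(0) + \lambda\bigl(f(M)-f(0)\bigr) \;=\; \lambda f(M) + (1-\lambda)f(0)\text{,}
\]
as required.

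There is no real obstacle here; the only thing one has to be slightly careful about is the hypothesis $f(M)\ge f(0)$, which is exactly what allows the monotone replacement of $\mu$ by $\lambda$ in the last step (without it one would only obtain the tighter-looking but unusable bound with $\expt[X]/M$ in place of $\lambda$, which cannot be weakened to $\lambda$ in general).
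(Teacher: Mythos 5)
Your proof is correct and is essentially identical to the paper's: both use convexity to bound $f(X)$ by the chord $\frac{X}{M}f(M) + (1-\frac{X}{M})f(0)$, take expectations, and then use $f(M)\geq f(0)$ to replace $\expt[X]/M$ by the larger $\lambda$. Your explicit treatment of the degenerate case $M=0$ is a minor addition the paper omits.
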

\begin{proof}
Consider another random variable $Y := \frac{X}{M} f(M) + (1 - \frac{X}{M}) f(0)$. From the convexity of $f$ we have $f(X) \leq Y$. Therefore
\[
\expt[f(X)] \leq \frac{\expt[X]}{M} \cdot f(M) + f(0) - \frac{\expt[X]}{M} \cdot f(0) \leq f(0) + (f(M) - f(0)) \cdot \frac{\expt[X]}{M} \leq \lambda f(M) + (1 - \lambda) f(0)\text{,}
\]
as desired.
\end{proof}

\subsection{Preliminaries}
Let $H=(V,E)$ be a hypergraph and let $k$ denote the minimum size of an edge in $H$.
For any $j\geq k$ we define
\[
	q_j := \sum_{e \in E, |e|=j} 2^{-j+1},
\]
which is the expected number of monochromatic edges of size $j$ when the color of each vertex is sampled uniformly at random.
Let $q:= q(H)$ and observe that $q = \sum_{j\geq k} q_j$.

We aim to prove that if $q = O(\log k)$ then the hypergraph is two colorable.
In order to do that, we describe a random coloring procedure and with a careful analysis we bound the probability that a fixed edge is monochromatic after the procedure finishes.
The obtained bound allows us to conclude that the expected number of monochromatic edges after the procedure finishes is smaller than one.
Thus, the hypergraph is two colorable.

\subsection{The coloring procedure}

Our algorithm is based on the \emph{random greedy coloring} and it works in two phases:

\begin{enumerate}
\item \textbf{Initial coloring}

We start by independently sampling, for every vertex $v$, the following two values:
\begin{itemize}
	\item $ic(v)$ -- the \emph{initial color} of $v$: blue or red, each with probability $\frac{1}{2}$,
	\item $w(v)$ -- the \emph{weight} of $v$, sampled uniformly at random from the real interval $(0,1)$.
\end{itemize}
The edge in which all vertices get the same initial color is called \emph{initially monochromatic}.
We assume that no two vertices have the same weight -- we discuss this in Section \ref{sec:global-events}.
For an edge $e$, the \emph{heaviest} vertex in $e$ is the one with maximum weight among vertices in $e$.
We define the weight $w(e)$ of $e$ to be the weight of the heaviest vertex in $e$ (i.e., $w(e)= \max_{v\in e} w(v)$).

\item \textbf{Recoloring}

We iterate over all vertices in the order of increasing weight and for each vertex $v$, we define $c(v)$ -- the \emph{color} of $v$.
Once the color is assigned, it is never changed.
We say that a vertex $v$ is \emph{recolored} if $c(v)$ is already defined and it is different than $ic(v)$.
Our goal is to recolor at least one vertex in each initially monochromatic edge.
If $v$ is the heaviest vertex in some initially monochromatic edge $e$, then the color of every other vertex in $e$ is already defined.
If none of the vertices in $e$ is recolored, we define $c(v)$ to be the color other than $ic(v)$ (i.e., we recolor $v$), and we say that $e$ is a \emph{reason} to recolor $v$.
Note that there may be more than one reason to recolor $v$.
If there is no reason to recolor $v$ (i.e., no initially monochromatic edge with no recolored vertex and heaviest vertex $v$) we simply assign $c(v) = ic(v)$.
Observe that eventually every initially monochromatic edge gets one of the vertices recolored.
\end{enumerate}

\subsection{Main result}

For a better exposition of the argument, we first prove a statement slightly weaker than Theorem \ref{thm:main}.
In Section \ref{sec:simple-bound} we give a proof of the following result about the coloring procedure.
\begin{prop}
\label{prop:oneEdgeBound-simple}
 If $q = O(\frac{\log k}{\log \log k})$ then, for any edge $e$, the probability that all vertices in $e$ are colored red does not exceed $\frac{1}{3q2^{|e|-1}}$.
\end{prop}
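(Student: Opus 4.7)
The plan is to bound $\prob[e \text{ all red}]$ by a union bound over possible witness structures certifying the bad event, and then to extract a factor of $1/q$ via the hypothesis $q = O(\log k / \log\log k)$ combined with Lemma~\ref{lemma:conv-expt}.

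First, I would establish a structural observation: $e$ cannot end all red if its initial coloring is all red. Let $v_e$ denote the heaviest vertex in $e$; it is processed last among $e$'s vertices. Either no vertex of $e$ was recolored earlier, in which case $e$ itself is a valid reason and $v_e$ is recolored to blue, or some other vertex of $e$ has already been recolored from red to blue. Either way, $e$ contains a blue vertex at the end. Hence on the event ``$e$ ends all red'' the set $B := \{v \in e : ic(v) = \text{blue}\}$ is nonempty, and every $v \in B$ is recolored (to red) by an initially all-blue edge $f_v \ni v$ in which $v$ is the heaviest vertex.

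Second, I union-bound over all consistent witness structures $(B, (f_v)_{v \in B})$, dropping the ``no prior recoloring'' clause (which can only enlarge the event):
\[
\prob[e \text{ all red}] \leq \sum_{\emptyset \neq B \subseteq e} \sum_{(f_v)_{v \in B}} \prob\bigl[\text{initial colors consistent with the structure and $v$ heaviest in $f_v$ for all $v$}\bigr].
\]
Each term factors as an initial-color probability equal to $2^{-|e \cup \bigcup f_v|}$ (when the structure is consistent, i.e., no $f_v$ meets $e \setminus B$) times the joint weight probability of the $v$-is-heaviest-of-$f_v$ events. The latter requires care, since overlapping $f_v$'s give rise to positively correlated weight conditions; one handles this either by restricting to pairwise disjoint witness edges (the main case) or by computing the weight events directly.

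Third, after extracting a factor of $2^{-|e|}$, I would rearrange the remaining sum using the identity $\sum_{v \in V} p_v = q$, where $p_v := \sum_{f \ni v} 2^{-|f|+1}/|f|$. Applying Lemma~\ref{lemma:conv-expt} to a suitable bounded convex functional of the witness count in $e$ --- with the range controlled by $|e| \geq k$ and the expectation by $\sum_{v \in e} p_v \leq q$ --- converts the sum into the target bound $\frac{1}{3q \cdot 2^{|e|-1}}$.

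The hardest step will be the third. The naive expansion $\prod_{v \in e}(1 + p_v) - 1 \leq \econst^{\sum p_v} - 1 \leq \econst^q - 1$ is dramatically too weak, since in our regime $\econst^q = k^{\Theta(1/\log\log k)}$ tends to infinity. Obtaining the required $\Omega(q)$ improvement must exploit both the spread of witness potential over at least $k$ vertices of $e$ and the hypothesis $q = O(\log k/\log\log k)$; selecting the appropriate convex function in Lemma~\ref{lemma:conv-expt} and verifying the resulting saving is the delicate technical heart of the argument.
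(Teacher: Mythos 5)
Your first two steps reproduce the paper's setup: the same structural necessary condition (every initially blue vertex of $e$ must be the heaviest vertex of some initially all-blue witness edge), and the same union bound over witness structures. You also correctly diagnose that the naive estimate $2^{-s}\bigl(\prod_{v\in e}(1+p_v)-1\bigr)\leq 2^{-s}(\econst^{q}-1)$ is hopelessly weak. But the third step, which you defer as ``the delicate technical heart,'' is the entire content of the proposition, and the way you propose to carry it out would not work. Two ideas are missing. First, the correlation problem you flag for the events ``$v$ is heaviest in $f_v$'' is not resolved by restricting to disjoint witnesses; the paper removes it with the bad event $\mathcal{B}$ (no \emph{light} monochromatic edge): conditioned on $\overline{\mathcal{B}}$, a recolored vertex whose cheapest witness has size $j$ must itself have weight at least $1-p_j$ with $p_j=\ln(\cp q)/j$ --- an event depending only on that vertex's own weight, hence exactly factorizable over the vertices of $e$ and independent of all colors. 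Second, and more seriously, the application of Lemma~\ref{lemma:conv-expt} you sketch --- ``range controlled by $|e|\geq k$, expectation by $\sum_{v\in e}p_v\leq q$'' --- gives $\lambda\approx 1$ and therefore no saving at all; the ratio $\expt/\text{range}$ is where the factor $1/k$ must come from, and your proposed quantities do not produce it.

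What actually closes the gap is a change in the order of conditioning. The paper first reveals the colors \emph{outside} $e$; this determines a random variable $X=\sum_j R_j^e\,p_j$ (a $p_j$-weighted count of threatened vertices of $e$), and the union bound then gives $\probG[\text{$e$ becomes red}\mid X=x]<(\exp(x)-1)/2^{s}$. The exponent is now a \emph{random} variable with $\expt[X]\leq q\ln(\cp q)/k$ (the $1/k$ coming from $p_j\leq\ln(\cp q)/k$ for all $j\geq\minedge(H)=k$) and with the almost-sure bound $X\leq \cc q\ln(\cp q)$ supplied by a further bad event $\mathcal{C}$ controlling the number of almost monochromatic edges. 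Lemma~\ref{lemma:conv-expt} applied to $f(x)=\exp(x)-1$ with $M=\cc q\ln(\cp q)$ and $\lambda=1/(\cc k)$ then yields $\expt[\exp(X)-1]\leq(\cp q)^{\cc q}/(\cc k)$, which is $k^{o(1)-1}=o(1/q)$ precisely in the regime $q=O(\log k/\log\log k)$. Without conditioning on the outside colors (so that the exponent has mean $O(q\log q/k)$ rather than the deterministic value $q$), and without the events $\mathcal{B}$ and $\mathcal{C}$ that make the exponent respectively factorizable and bounded, the convexity lemma cannot deliver the required saving; as written, your plan stops exactly where the proof begins.
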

This immediately implies that the expected number of monochromatic edges is at most $2 \cdot \sum_{e \in E} \frac{1}{3\cdot q \cdot 2^{|e|-1}} = \frac{2}{3}$ and thus, not only $H$ is two colorable but also that our coloring procedure succeeds with probability at least $\frac{1}{3}$.
In Section \ref{sec:better-bound} we introduce more technical details to the argument and improve the bound.
\begin{prop}
\label{prop:oneEdgeBound-better}
 If $q = O(\log k)$ then, for any edge $e$, the probability that all vertices in $e$ are colored red does not exceed $\frac{1}{3q2^{|e|-1}}$.
\end{prop}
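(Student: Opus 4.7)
Fix an edge $e$ and let $\text{Red}(e)$ denote the event that every vertex of $e$ is colored red at the end; by symmetry it suffices to prove $\Pr[\text{Red}(e)]\leq 1/(3q\cdot 2^{|e|-1})$. First, observe that if every vertex of $e$ is initially red, then $e$ is initially monochromatic red and its heaviest vertex gets recolored by the procedure, contradicting $\text{Red}(e)$; so on $\text{Red}(e)$ the set $B\subseteq e$ of initially blue vertices is nonempty and every $v\in B$ must be recolored. Conditioning on the initial coloring of $e$ yields
\[
\Pr[\text{Red}(e)]\ \leq\ \sum_{\emptyset\neq B\subseteq e} 2^{-|e|}\,\Pr\bigl[\,\forall v\in B:\ v\text{ is recolored}\ \bigm|\ ic|_e\text{ agrees with }B\,\bigr],
\]
reducing the task to bounding the inner conditional probability.

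A vertex $v$ is recolored iff some initially blue monochromatic ``witness'' edge $f_v\ni v$ has $v$ heaviest in $f_v$ and no vertex of $f_v\setminus\{v\}$ was recolored before $v$ was processed. For an upper bound I would first drop the ``no earlier recoloring'' clause---this is essentially the route of Proposition~\ref{prop:oneEdgeBound-simple}---and union-bound over candidates $f$. Conditioning on $v$ being blue with weight $t$, each $f\ni v$ of size $s$ contributes $(t/2)^{s-1}$, so integrating over $t\in(0,1)$ yields a per-vertex bound controlled by $\sum_{f\ni v}2^{-|f|+1}/|f|$, whose total over $v\in V$ is exactly $q$. Combining such per-vertex bounds across $v\in B$ by a naive independence-style argument loses a $\log\log k$ factor and only recovers Proposition~\ref{prop:oneEdgeBound-simple}.

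To push the hypothesis to $q=O(\log k)$ I would exploit the joint structure of the event on $B$ more carefully. A natural approach is to enumerate $B$ by its minimum-weight vertex $u$, condition on $w(u)=t_0$, and bound the joint probability that every vertex of $B\setminus\{u\}$ is recolored by applying Lemma~\ref{lemma:conv-expt} to a suitably defined witness-counting random variable $X$ with support on the weight interval $(t_0,1)$. The expectation of $X$ is controlled by something of order $q\cdot(1-t_0)$, while the relevant convex function $f(X)$ should encode the product structure of ``all of $B\setminus\{u\}$ is recolored''; the two-point envelope of Lemma~\ref{lemma:conv-expt} then replaces a wasteful Markov-style estimate by a tight bound, recovering the missing logarithm. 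Summing over $B$, $u$, and integrating over $t_0$---and multiplying by the $2^{-|e|}$ factor from the initial coloring---should yield the claimed $1/(3q\cdot 2^{|e|-1})$ under $q\leq\delta\log k$ for sufficiently small $\delta>0$. The main obstacle, where all technicalities accumulate, is identifying the precise convex functional and random variable for Lemma~\ref{lemma:conv-expt} so that the ``all of $B$ is recolored'' product is dominated without the shared-vertex structure of witness edges causing multiplicative losses; once this calibration is in place the remaining bookkeeping in powers of $2$, $|B|$, and $|e|$ should be routine.
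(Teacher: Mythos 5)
Your setup --- conditioning on the nonempty set $B$ of initially blue vertices of $e$, each of which must be recolored, at a cost of $2^{-|e|}$ per choice of $B$ --- matches the paper's, and your diagnosis that the naive per-vertex union bound only recovers Proposition \ref{prop:oneEdgeBound-simple} is also right. But the proposal stops exactly where the new content of Proposition \ref{prop:oneEdgeBound-better} begins: you defer ``identifying the precise convex functional and random variable'' to later, and the route you sketch for finding it (enumerating $B$ by its minimum-weight vertex $u$ and conditioning on $w(u)=t_0$) does not correspond to any mechanism that produces the required saving. The actual gain comes from two concrete ingredients that are absent from your plan. First, a new bad event $\mathcal{D}$: for each initially monochromatic edge $f$ define the second weight deficit $d_2(f)=(|f|+1)(1-w_2(f))$, where $w_2(f)$ is the weight of the second heaviest vertex of $f$; then $\expt\bigl[\sum_f d_2(f)\bigr]=2q$, so outside a low-probability event this sum is at most $\cd q$. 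Second, for each $v_j\in B$ to be recolored it must outweigh the lightest edge $f_{v_j}$ of $H_e$ endangering it, an event of probability $1-w(f_{v_j})=\delta_j/(|f_{v_j}|+2)$ with $\delta_j\leq d_2(f_{v_j}\cup\{v_j\})$; since the edges $f_{v_j}\cup\{v_j\}$ are distinct initially monochromatic edges of $H$, excluding $\mathcal{D}$ gives $\sum_j\delta_j\leq\cd q$, and AM--GM yields $\prod_j\delta_j\leq(\cd q/c)^c$ for $c=|B|$.

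The extra factor $c^{-c}$ is the whole point: it converts the generating series $\sum_c x^c/c!$ of the simple bound into $\sum_c x^c/(c!\,c^c)\leq\sum_c(2x)^c/(2c)!\leq\cosh(\sqrt{2x})-1$, and applying Lemma \ref{lemma:conv-expt} to $f(x)=\cosh(\sqrt{2\cd qx})-1$ with the variable $Y_e=\sum_{\text{blue }f_e}1/(|f_e|+1)$ (which satisfies $\expt[Y_e]\leq q/k$ and $Y_e\leq\cc q$ off event $\mathcal{C}$) gives a final bound of order $k^{-1}\exp(\sqrt{2\cd\cc}\,q)$ instead of $k^{-1}\exp(\Theta(q\log\log k))$; this is precisely what admits $q=\Theta(\log k)$. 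Without an argument that the recoloring probabilities $1-w(f_{v_j})$ are \emph{jointly} small --- not merely small individually or in expectation --- your plan cannot beat Proposition \ref{prop:oneEdgeBound-simple}, so as it stands the proposal has a genuine gap rather than a different proof.
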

This immediately implies Theorem \ref{thm:main}.

\section{Analysis}

\subsection{Bad events}
\label{sec:global-events}

The proof focuses on bounding the probability that one, fixed edge becomes monochromatic red.
Nevertheless, we want to first exclude some problematic but unlikely events from happening.
The simplest example is that we don't want two different vertices to receive the same weight.
The probability of this event is zero and we want to simply assume that it doesn't happen.
To be more precise, we allow our coloring procedure to fail during the initial phase of coloring.
We give a few different reasons to fail and we argue that the probability that any of those bad events happens is small.
Then, for the rest of the proof, we assume that none of the bad events happens.

\subsubsection{Event $\mathcal{A}$ -- too many initially monochromatic edges}
The expected number of initially monochromatic edges is $q$.
For a constant $\ca$ (to be fixed later) let $\mathcal{A}$ denote the event that there are more than $\ca \cdot q$ initially monochromatic edges.
Markov inequality gives that $\prob[\mathcal{A}] < 1/\ca$. 

\subsubsection{Event $\mathcal{B}$ -- a light monochromatic edge}
For a constant $\cp$ (to be fixed later) and every $j$ we define 
\[
	p_j := \frac{\ln (\cp q)}{j}.
\]
An edge $e$ of size $j$ is \emph{light} if it is initially monochromatic and the weight of every vertex in $e$ is smaller than $1-p_j$.
The expected number of light monochromatic edges of size $j$ is 
\[
    q_j 2^{j-1} \cdot (1-p_j)^j \cdot 2^{-j+1} < q_j  \cdot j \cdot \exp(-p_j) = \frac{q_j}{\cp q}.
\]
Therefore, the expected total number of light edges (of any size) is at most $1/\cp$.
Let $\mathcal{B}$ denote the event that there is a light monochromatic edge.
Clearly $\prob[\mathcal{B}]< 1/\cp$. 

\subsubsection{Event $\mathcal{C}$ -- too many almost monochromatic edges}
An edge $e$ is \emph{almost monochromatic} if there is a vertex $v\in f$ such that all vertices in $f-v$ have the same initial color (in particular, an initially monochromatic edge is also an almost monochromatic edge).
With every almost monochromatic edge $f$ we can injectively associate a \emph{certifying pair} $(f,v)\in E\times V$ for which $v\in f$ and $f - v$ is initially monochromatic.

Let $Q_j$ be a random variable that denotes the number of almost monochromatic edges of size $j$.
Since the number of such edges cannot exceed the number of certifying pairs associated with edges of size $j$, we get
$\expt[Q_j] \leq q_j 2^{j-1} \cdot j \cdot 2^{-j+2} = 2 j \cdot q_j$.
We define random variable
\[
	Y:= \sum_j \frac{Q_j}{j},
\]
and get that $\expt[Y]\leq 2q$.
Let $\mathcal{C}$ denote the event that $Y > \cc q$.
Markov inequality gives $\prob[\mathcal{C}]< 2/\cc$.
\bigskip
\\
For any fixed $\epsilon>0$ we can choose constants $\ca,\cp,\cc$ so that $1/\ca + 1/\cp+ 2/\cc  < \epsilon$.
Denote by $\mathcal{G}$ the intersection $\overline{\mathcal{A}} \cap \overline{\mathcal{B}} \cap \overline{\mathcal{C}}$ and observe that $\prob[\mathcal{G}] >1-\epsilon$.
That is, with arbitrarily high probability none of the bad events happens.
For any event $\mathcal{V}$, we denote $\prob[\mathcal{V} \cap \mathcal{G}]$ by $\probG[\mathcal{V}]$
and similarily by $\probG[\mathcal{V} | \mathcal{C}]$ we mean $\prob[\mathcal{V} \cap \mathcal{G}| \mathcal{C}]$.

\subsection{$e$-focused coloring}
For the rest of this section and the next section we fix an arbitrary edge $e$ in $E$.
Let $s$ denote the size of $e$.
The event ``\emph{$e$ becomes red}'' denotes the situation that all vertices in $e$ are colored red by the coloring procedure.
First observation is that if $e$ is initially monochromatic red, then at least one vertex in $e$ gets recolored and $e$ can't become red in the end.
Thus, if $e$ becomes red then $e$ contains some initially blue vertices and each of them gets recolored.
In particular, every initially blue vertex in $e$ is the heaviest vertex in some initially monochromatic blue edge.
Additionally, it needs to happen that none of the initially red vertices in $e$ gets recolored, but this condition seems impossible to use.

Taking into account the bad events we aim to prove that for a proper $q$ we have:
\[
	\probG[\text{$e$ becomes red}]< \frac{1}{3} \cdot \frac{1}{q 2^{s-1}}.
\]

\subsubsection{The threat hypergraph}
In what follows we try to understand better which initially blue vertices in $e$ are recolored to red.
The important observation is that if edges $f$ and $e$ have more than one vertex in common and $f$ is a reason to recolor any of the common vertices, then $e$ does not become red.
To see that, let $v$ be the heaviest vertex in $f$, and let $w$ be any vertex in $f \cap e$ other than $v$.
If $f$ is a reason to recolor $v$ then $f$ is initially monochromatic blue and $w$ is not recolored.
Thus, $w$ retains the initial blue color, and edge $e$ does not become red.

This motivates the following construction of the \emph{threat hypergraph} $H_e$.
We define the vertex set of $H_e$ to be $V\setminus e$.
For each edge $f$ in $E$ that has exactly one common vertex with $e$ (i.e., $|f \cap e| = 1$), let $f_e = f \setminus e$.
We define the edge set of $H_e$ to be $\{f_e : f \in E, |f \cap e| = 1\}$.
Observe that for different edges $f \neq f'$ in $E$ it might happen that $f_e = f'_e$.
Thus, $H_e$ is a multihypergraph.
For each edge $f_e$ of $H_e$ we call $f$ to be the \emph{extension edge} of $f_e$ and we call the only vertex in $f \cap e$ to be the \emph{extension vertex} of $f_e$.

For the sake of our analysis, we reveal the outcomes of the random experiments used in the coloring procedure in four steps. 
In the first step we reveal the initial colors of the vertices in $H_e$.
In the second step, we reveal the initial colors of the vertices in $e$.
Then, we reveal the weights of vertices in $H_e$.
Finally, we reveal the weights of the vertices in $e$.
It is crucial to understand that this does not influence the coloring procedure in any way.

After the first step, some edges in $H_e$ are monochromatic blue.
For every such an edge $f_e$, let $v$ be the extension vertex of $f_e$, and we say that $v$ is \emph{endangered} by $f_e$.
Observe that if $e$ is to become red, then among vertices in $e$, only the endangered ones can be recolored from blue to red.
For every endangered vertex $v$ in $e$ we define the \emph{severity} of $v$ to be the minimum size $|f|$ of an edge such that $v$ is endangered by $f_e$.
We define $\mathcal{R}^e_j$ to be the set of all vertices in $e$ that are endangered and with severity $j$.
Let $R_j^e:=|\mathcal{R}^e_j|$. 
Note that both $R_j^e$ and $\mathcal{R}^e_j$ are random variables which are determined after the first step (i.e., by the initial colors of the vertices in $H_e$).

Thus, a necessary condition for $e$ to become red is that in the second step only the endangered vertices get initial color blue.
Consider an endangered vertex with severity $j$ which is initially blue and which is to become red.
There is an edge $f_e$ that endangers $v$, and $v$ becomes the heaviest vertex in the extension of $f_e$.
In particular, since the size of $f$ is at least $j$, the weight of $v$ (revealed in the fourth step) has to be at least $1-p_j$.
Otherwise, the edge $f$ is a light monochromatic edge and bad event $\mathcal{B}$ happens.

Observe that there are no more vertices recolored than there are initially monochromatic edges.
As we assume that bad event $\mathcal{A}$ does not happen, there are at most $\ca q$ vertices recolored in total.
Let us sum up the observed necessary conditions for the edge $e$ to become red:
\begin{enumerate}
	\item at least one and at most $\ca q$ vertices in $e$ are initially blue,
    \item every initially blue vertex $v$ in $e$ is endangered. If severity of $v$ is $j$ then $w(v) \geq 1-p_j$.
\end{enumerate}
We use these conditions to obtain an upper bound on the probability of $e$ becoming red.

\subsection{Simple bound}
\label{sec:simple-bound}
We define, mainly for technical convenience, a random variable
\[
    X:=\sum_j  R_j^e \cdot p_j\text{.}
\]
Observe that $X$ is determined after the first step and that $X$ takes only a finite number of possible values.
For the rest of this section whenever we condition on event $X = x$ we always assume that the value $x$ is such that $\prob[X=x]>0$.
The bound will follow from the following result:
\begin{prop}
\label{prop:bound1-4}
\[
	\probG[\text{$e$ becomes red } | X=x] < \frac{\exp(x)-1}{2^{s}}.
\]
\end{prop}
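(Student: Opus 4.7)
The plan is to condition on the full outcome of step 1 (the initial colors of the vertices in $H_e$), which determines the sets $\mathcal{R}^e_j$, the severity of every endangered vertex in $e$, and the value of $X$. Since the initial colors and weights of vertices in $e$ (revealed in steps 2 and 4) are sampled independently of step 1, I can prove a bound that holds for each fixed step-1 outcome and then average over the outcomes consistent with $X = x$.

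For a fixed step-1 outcome, let $B \subseteq e$ denote the (random) set of initially blue vertices in $e$. I would identify three necessary conditions for the event ``$e$ becomes red'' to occur inside $\mathcal{G}$. First, $B$ must be nonempty, else $e$ is initially monochromatic red and its heaviest vertex is recolored. Second, $B \subseteq \bigcup_j \mathcal{R}^e_j$: on $\mathcal{G}$ every initially blue $v \in e$ must be recolored, and the reason edge $f$ must have $|f \cap e| = 1$ (otherwise a second common vertex of $e$ retains its initial blue color), so $f \setminus \{v\} \in H_e$ is a monochromatic blue edge witnessing $v$ as endangered. Third, each $v \in B$ with severity $j$ satisfies $w(v) \geq 1 - p_j$: the reason edge $f$ has $|f| \geq j$ and $v$ as its heaviest vertex, so on $\overline{\mathcal{B}}$ the weight of $v$ is at least $1 - p_{|f|} \geq 1 - p_j$.

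Using independence of the initial colors and weights of vertices in $e$ from step 1 and from each other, the probability of these three conditions (for a fixed step-1 outcome) is at most
\[
\sum_{\emptyset \neq B \subseteq \bigcup_j \mathcal{R}^e_j} 2^{-s} \prod_{v \in B} p_{\mathrm{sev}(v)} \;=\; 2^{-s}\left( \prod_{v \in \bigcup_j \mathcal{R}^e_j} (1 + p_{\mathrm{sev}(v)}) - 1 \right),
\]
where I used $\prob[w(v) \geq 1 - p] = p$ and expanded the product. Applying $1 + p \leq \exp(p)$ in each factor and recognising $\sum_v p_{\mathrm{sev}(v)} = \sum_j R^e_j\, p_j = X$, this is bounded by $2^{-s}(\exp(X) - 1)$. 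Since the bound depends on step 1 only through $X$, averaging over step-1 outcomes consistent with $X = x$ yields the desired estimate.

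The only point that requires real care is the third condition: the severity of $v$ is defined as a minimum over all endangering edges, which need not coincide with the size of the edge that actually recolors $v$; monotonicity of $p_j$ in $j$ is what keeps the bound $w(v) \geq 1 - p_j$ valid in both cases. Everything else reduces to expanding the product $\prod_v (1 + p_v)$ and invoking $1 + p \leq \exp(p)$ term by term.
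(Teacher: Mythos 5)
Your proposal is correct and follows essentially the same route as the paper: condition on the step-1 outcome, use the same three necessary conditions (nonempty blue set, all blue vertices endangered, the weight threshold $1-p_j$ via event $\mathcal{B}$), and bound the probability by a sum over the possible sets of blue endangered vertices. The only difference is cosmetic: you close the estimate via $\prod_v(1+p_{\mathrm{sev}(v)})-1\leq \exp(X)-1$ over all nonempty subsets, while the paper groups vertices by severity, uses $\binom{r_j}{c_j}\leq r_j^{c_j}/c_j!$, and sums the exponential series truncated at $\ca q$ --- both yield the same bound, and the truncation is not needed (as the paper itself remarks, event $\mathcal{A}$ plays no essential role here).
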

\begin{proof}
Assume that we are after the first step and the values of variables $\mathcal{R}^e_j$, $R_j^e$, and $X$ are determined.
For each $j \geq k$, let $r_j := R_j^e$.
With this assumption, we compute the probability of $e$ becoming red.
We claim that
\[
    \probG[\text{$e$ becomes red } | \text{the first step} ]
	\leq 	\frac{1}{2^{s-\sum_j r_j}}
	\sum_{1\leq c_k+c_{k+1}+ \ldots \leq \ca q} 
	\prod_{j} 
	{r_j \choose c_j} 
	\left( \frac{p_j}{2} \right)^{c_j}
	\left( \frac{1}{2} \right)^{r_j-c_j}.
\]
The first factor corresponds to the not endangered vertices in $e$ -- each of them needs to be initially colored red.
The sum spans over the values $c_k, c_{k+1}, \ldots$, where $c_j$ corresponds to the number of initially blue vertices in $R_j^e$.
There are exactly $\sum_{j} c_j$ initially blue elements and we know that this number is at least $1$ and at most $\ca q$.

Once the number of initially blue elements in each $\mathcal{R}^e_j$ is fixed, there are ${r_j \choose c_j}$ possibilities to choose these elements from $\mathcal{R}^e_j$. 
Finally, all the chosen elements have to be initially colored blue and their weight has to be at least $1-p_j$.
The remaining elements of $\mathcal{R}^e_j$ have to be initially colored red.

Observe that the expression depends not on a particular result of the first phase, but rather only on the values of $R_j^e$.
We use the fact that ${r_j \choose c_j} \leq \frac{r_j^{c_j}}{c_j!}$, rearrange the terms, and obtain:
\begin{align*}
	\probG[\text{$e$ becomes red } | (R_j^e = r_j)_{j \geq k}]
	& \leq
	\frac{1}{2^{s}}
	\sum_{c=1}^{\ca q}
	\sum_{c_k+c_{k+1}+ \ldots =c} 
	\prod_{j} 
	{r_j \choose c_j} 
		p_j^{c_j}
\\
& \leq  \frac{1}{2^{s}}
	\sum_{c=1}^{\ca q}
	\frac{1}{c!}
    \left( \sum_j  r_j \cdot p_j\right)^c\text{.}
\end{align*}

Let $x := \sum_j r_j \cdot p_j$, recall that $X=\sum_j  R_j^e \cdot p_j$, and observe that the last expression depends not on the particular values of $R_j^e$, but rather only on the value of $X$.
\begin{align}
\label{eq:BoundBySum}
	\probG[\text{$e$ becomes red } | X=x] 
	& \leq \frac{1}{2^{s}}
	\sum_{c=1}^{\ca q}
	\frac{x^c}{c!}\\
\label{eq:BoundByExp}	&< 
    \frac{\exp(x)-1}{2^{s}}\text{.}
\end{align}
\end{proof}
\begin{proof}[Proof of Proposition \ref{prop:oneEdgeBound-simple}]
Recall that the random variable $Q_j$ denotes the number of almost monochromatic edges of size $j$, while $R_j^e$ is the number of endangered vertices in $e$ with severity $j$.
For every such a vertex $v$ we have an initially blue edge $f$ in $H_e$ for which $v$ is the extension vertex.
Since the extension edge of $f$ is almost blue we obtain that $R_j^e \leq Q_j$.
This implies:
\begin{equation}
\label{eq:XlogY}
	X= \sum_{j} R_j^e \cdot p_j = \ln(\cp q) \cdot \sum_{j} \frac{R_j^e}{j} \leq \ln(\cp q) \cdot \sum_{j} \frac{Q_j}{j} =    
     \ln(\cp q) \cdot Y\text{.}
\end{equation}
Therefore $X\leq \ln(\cp q) \cdot  \cc q$ unless bad event $\mathcal{C}$ happens.
We now have:
\begin{align*}
	\probG[\text{$e$ becomes red}] 
	& = \sum_{x\leq \ln(\cp q) \cdot  \cc q} \prob[X=x] \cdot \probG[\text{$e$ becomes red } | X=x]  \\
	& < \sum_{x\leq \ln(\cp q) \cdot  \cc q} \prob[X=x] \cdot \frac{\exp(x)-1}{2^{s}}  \\
    & \leq \frac{1}{2^{s}} \cdot \expt\left[\exp(X)-1 \Big| X\leq \ln(\cp q) \cdot  \cc q \right]\text{.}
\end{align*}
Inequality (\ref{eq:XlogY}) also yields
\[
    \expt[X] \leq \sum_j q_j \cdot p_j \leq \frac{q \ln(\cp q)}{k}\text{.}
\]
We apply Lemma \ref{lemma:conv-expt} for $f(x) = \exp(x) - 1$, $M = \cc q \ln(\cp q)$ and $\lambda = \frac{1}{\cc k}$, and obtain:
\[
    \expt[\exp(X)-1] \leq \frac{\exp(\ln(\cp q) \cdot  \cc q)}{\cc k}\text{.}
\]
Hence
\[
    \probG[\text{$e$ becomes red}]  < \frac{1}{2^{s}} \cdot \frac{\exp(\ln(\cp q) \cdot  \cc q)}{\cc k}\text{.}
\]
Let $\alpha > \max \{\cp,\cc\}$. Now, suppose that:
\[
    q \leq \frac{1}{\alpha}\cdot \frac{\ln k }{\ln\ln k}\text{.}
\]
For $k$ large enough, $\ln(\cp q) \leq \ln\ln(k)$ which yields:
\[
\frac{\exp(\ln(\cp q) \cdot  \cc q)}{\cc k} 
\leq \frac{1}{\cc k}\cdot \exp\left(\frac{\cc \ln k}{\alpha \ln \ln k} \cdot \ln \ln k \right) 
\leq \frac{k^{(\cc/\alpha) -1}}{\cc}\text{.}
\]
For $k$ large enough, the last term is less than $\frac{1}{6q}$, which implies:
\[
    \probG[\text{$e$ becomes red}]  < \frac{1}{3q \cdot 2^{s-1}}
\]
and completes the proof.
\end{proof}
An astute reader may have realised that we did not use bad event $\mathcal{A}$ in any essential way. 
Currently, the only reason to introduce $\mathcal{A}$ is that it makes the proof slightly easier.
We could, however, use $\mathcal{A}$ to improve bound (\ref{eq:BoundByExp}) for the values of $x$ greater than $q$, leading to
a slightly better condition $q = O(\frac{\log k}{\log\log\log k})$. 
We do not elaborate on that since the argument in Section \ref{sec:better-bound} already gives an even better result.

\subsection{Improved bound}
\label{sec:better-bound}

In order to obtain an improved bound we introduce one more bad event.

\subsubsection{Event $\mathcal{D}$ -- large second weight deficit}
For every edge $f$ in $E$ which is initially monochromatic, we define its \emph{second weight deficit} as $d_2(f):=(|f|+1) \cdot (1- w_2(f))$, where $w_2(f)$ is the weight of the second heaviest vertex in $f$.
For an edge $f$ that is not initially monochromatic, $d_2(f)$ is defined to be 0.

Note that, conditioned on $f$ being initially monochromatic, the variable $1-w_2(f)$ has mean $\frac{2}{|f|+1}$.
In particular $\expt[d_2(f)| \text{ $f$ is monochromatic}]=2$ and hence $\expt[d_2(f)] = \frac{2}{2^{|f|-1}}$.
Let $D_2:= \sum_{f\in E} d_2(f)$ and observe that we have
\[
	\expt[D_2] = \sum_{j} 2q_j = 2q.
\]
Event $\mathcal{D}$ is defined as $D_2 > \cd q$. 
By Markov inequality, we get $\prob[\mathcal{D}]< 2/\cd$ and we can chose $\cd$ so that this probability is arbitrarily small.

%
%
\subsubsection{Analysis}
In the first step of $e$-focused coloring we reveal the initial colors of all the vertices from $V\setminus e$. 
This step determines the endangered vertices in $e$ -- we denote their set by $\mathcal{R}$.
For every value of $c = 1, 2, \ldots \ca q$ and every $c$-subset $S=\{v_{1}, \ldots, v_{c}\}$ of $\mathcal{R}$ we consider an event that $S$ contains exactly the vertices in $e$ which become recolored.
Thus, these are the only initially blue vertices in $e$.
Once we fix the subset $S$, the probability that $S$ is the set of initially blue vertices in $e$ is precisely $2^{-s}$.
This event is determined after the second step of $e$-focused coloring -- when the initial colors of vertices in $e$ are revealed.
In order to be recolored, every vertex $v_{j}$ must receive a weight that makes it heavier than some edge that endangers it.
Let us reveal the weights of the vertices in $V\setminus e$ (third step of $e$-focused coloring).
The vertex $v_{j}$ is endangered by some edges $f_{v_j}^1, \ldots, f_{v_j}^{t}$ of $H_e$, and let $f_{v_j}$ be the lightest of these edges 
(i.e. the edge whose heaviest vertex is the lightest among the heaviest vertices of $f_{v_j}^1, \ldots, f_{v_j}^t$).
Clearly in order for vertex $v_{j}$ to be recolored, it has to get a weight greater than $w(f_{v_j})$
-- this happens with probability $1-w(f_{v_j})$.
We choose a parametrization that takes into account the size of $f_{v_j}$ and denote the value $1-w(f_{v_j})$ by $\frac{\delta_j}{|f_{v_j}|+2}$.
Now, conditioned on the result of the first three steps, the probability that all vertices $\{v_{1}, \ldots, v_{c}\}$ are heavy enough is
\begin{equation}
\label{eq:fdelta}
    \prod_{j=1}^c \frac{\delta_j}{|f_{v_j}|+2} < \prod_{j=1}^c \frac{\delta_j}{|f_{v_j}|+1} \text{.}
\end{equation}
The edge $f_{v_j}$ together with $v_j$ forms an edge of $H$, which we denote by $h_{v_j}$.
Although the value of $d_2(h_{v_j})$ is not determined until we reveal the weight of $v_{j}$ (in the fourth step), we already know at this point that $\delta_j \leq d_2(h_{v_j})$ (it becomes an equality when $v_{j}$ becomes the heaviest vertex in $h_{v_j}$).
Assuming the bad event $\mathcal{D}$ does not happen, we have $\sum_{j=1}^c \delta_j \leq \cd q$.
Using the AM-GM inequality we deduce that $\prod_{j=1}^c \delta_j \leq \left( \frac{\cd q}{c}\right)^c$, which bounds the value of (\ref{eq:fdelta}):
\[
	\prod_{j=1}^c \frac{\delta_j}{|f_{v_j}|+1} \leq \left( \frac{\cd q}{c}\right)^c \prod_{j=1}^c \frac{1}{|f_{v_j}|+1}.	
\]

Summing over all $c$-subsets of $\mathcal{R}$ we get that the probability that some $c$-subset contains all initially blue vertices in $e$ and they are all recolored does not exceed
\begin{align*}
	\sum_{S \in {\mathcal{R} \choose c}}\frac{1}{2^{s}} \left( \frac{\cd q}{c}\right)^c \prod_{v\in S} \frac{1}{|f_v|+1}
	\leq \frac{1}{2^{s}} \left( \frac{\cd q}{c}\right)^c \frac{1}{c!}\left( \sum_{v\in \mathcal{R}} \frac{1}{|f_v| +1} \right)^c
.
\end{align*}

Define random variable
\[
    Y_e:= \sum_{\text{$f_e$ in $H_e$, $f_e$ is blue}} \frac{1}{ |f_e|+1}\text{,}
\]
which gets determined after the first step of $e$-focused coloring.
For each endangered vertex $v$ in $e$, all the edges, including the lightest one, that endanger $v$ are blue and thus are taken into the sum defining $Y_e$.
Therefore, $Y_e \geq \sum_{v\in \mathcal{R}} \frac{1}{|f_v| +1}$.
On the other hand, the extension edge of every blue edge in $H_e$ is an almost monochromatic edge in $H$.
As $Y$ counts the number of almost monochromatic edges in $H$, we get $Y_e \leq Y \leq \cc q$ unless bad event $\mathcal{C}$ happens.
We can also bound the expected value of $Y_e$:
\[
	\expt[Y_e] 
	= \sum_{f\in H_e} \frac{2^{-|f|}}{|f|+1} 
	< \frac{1}{k} \sum_{f'\in H} 2^{-|f'|+1}
    = \frac{q}{k}\text{.}
\]
Note that $Y_e$ takes only a finite number of possible values.
For any value $y$ such that $\prob[Y_e=y]>0$, we get the following bound:
\begin{align*}
	\probG[\text{$e$ becomes red } | Y_e=y] 
	&\leq
		\frac{1}{2^{s}} \sum_{c=1}^{\ca q} \left( \frac{\cd q}{c}\right)^c \frac{y^c }{c!}
\\
	&\leq
		 \frac{1}{2^{s}} \sum_{c=1}^{\ca q} \frac{(\cd q y)^c}{c! \cdot c^c}
\\
	& \leq
		 \frac{1}{2^{s}} \sum_{c=1}^{\ca q} \frac{(2\cd q y)^c} {(2c)!}	 
\end{align*}
as $\frac{(2c)!}{2^c} \leq \frac{c! \cdot (2c)^c}{2^c} = c! \cdot c^c$.
For any $x$, we have $\sum_{c=0}^{\infty} \frac{x^{2c}}{(2c)!} = \frac{\exp(x) + \exp(-x)}{2} = \cosh(x)$.
Therefore
\[
    \probG[\text{$e$ becomes red } | Y_e=y] \leq \frac{1}{2^{s}} (\cosh\left(\sqrt{2\cd q y}\right) - 1)\text{,}
\]
and
\[
	\probG[\text{$e$ becomes red}] 
	\leq 
    \expt\left[ \frac{1}{2^{s}} (\cosh\left(\sqrt{2\cd q Y_e}\right) - 1) \Big| Y_e \leq \cc q\right]\text{.}
\]

Observe that for any $a>0$, the function $\cosh\left(a \sqrt{x}\right)$ is convex and increasing in $[0, \infty)$.
Therefore, 
we apply Lemma \ref{lemma:conv-expt} for $f(x) = \cosh\left(\sqrt{2\cd q x}\right) -1$, $M = \cc q$ and $\lambda = \frac{1}{\cc k}$, and obtain:
\[
	\probG[\text{$e$ becomes red}] 
	\leq
\frac{1}{2^{s}} \frac{1}{\cc k}(\cosh\left(\sqrt{2 \cd \cc q q}\right) - 1) \leq 
\frac{1}{\cc k 2^{s}} \exp\left(\sqrt{2\cd \cc} \cdot q\right)
    \text{.}
\]  
The obtained value is smaller than $\frac{1}{3 q 2^{s-1}}$ whenever
\[
    \frac{3 q \exp\left(\sqrt{2 \cd \cc} \cdot q\right)}{2 \cc k} \leq 1\text{.}
\]
The last inequality is easily seen to hold for $q\leq \frac{0.9}{\sqrt{2 \cd \cc}} \ln k$ and all large enough $k$.

\section{Remarks}
\subsection{Bounded maximal size}
We can derive better bounds when the size of the maximum edge is not much larger than $k$.
Suppose that $\max_{e\in E} |e| \leq K$.
We apply the proof strategy from \cite{CK15} and analyze the random greedy coloring procedure (i.e. we arrange the vertices in random order and color consecutive vertices blue if this does not create a monochromatic edge, otherwise we color it red).
As a technical convenience, instead of sampling a random ordering of the vertices, for every vertex we choose uniformly a weight from the real interval $(0,1)$.
We color vertices greedily in the order of increasing weights.
We choose (with foresight) parameter $p:= \ln(4q)/k$.
An edge is called \emph{light} if the weight of its heaviest vertex is at most $(1-p)/2$.
Similarly an edge $f$ is \emph{heavy} if every verex in $f$ has weight at least $(1+p)/2$.
The probability that there exists a light edge is bounded by the expected number of such:
\[
	\sum_{f\in E} \left( \frac{1-p}{2}\right)^{|f|} \leq (1-p)^k \cdot q.
\] 
The same bound holds for \emph{heavy} edges.
It is easy to see that in order for the procedure to fail there must exist a pair of edges $f_1, f_2$ such that the heaviest vertex of $f_1$ is the lightest vertex of $f_2$. 
Such a pair is called \emph{conflicting}.
Therefore for the procedure to fail it is necessary that either there exists a conflicting pair $f_1,f_2$ for which the weight of the unique common vertex belongs to $((1-p)/2, (1+p)/2)$ or there exists a light or heavy edge.
The expected number of such conflicting pairs is at most
\begin{align*}
&  \sum_{f_1, f_2 \in E} \int_{-p/2}^{p/2} \left(\frac{1}{2}+x\right)^{|f_1|-1} \left(\frac{1}{2}-x\right)^{|f_2|-1} dx
\\
	&\leq
	\sum_{f_1, f_2 \in E} 2^{-|f_1|-|f_2|+2} \cdot p \cdot \max_{x\in (-p/2, p/2)} (1+2x)^{|f_1|-1}(1-2x)^{|f_2|-1} 
\\
	&\leq
	p \cdot \sum_{f_1, f_2 \in E} 2^{-|f_1|-|f_2|+2} \cdot \max_{x\in (-p/2, p/2)} (1+2x)^{|f_1|-|f_2|}
\\
	&=
		p \cdot \sum_{f_1, f_2 \in E} 2^{-|f_1|-|f_2|+2} \cdot (1+p)^{|f_1|-|f_2|}
\\
	&\leq
		p \cdot (1+p)^{K-k} \sum_{f_1, f_2 \in E} 2^{-|f_1|-|f_2|+2} = p (1+p)^{K-k} q^2.
\end{align*}
Altogether the probability of failure is at most
\[
	p (1+p)^{K-k} q^2 + 2 (1-p)^k q  \sim p q^2 \exp(p(K-k)) +2 q \exp(-p k).
\]
Plugging in the value of $p$ we get
\[
	\frac{\ln(4q) q^2 (4q)^{K/k-1}}{k} + 1/2 \leq \frac{\ln(k)(4q)^{K/k+1}}{k}+ 1/2
\]
where we additionally assumed that $q \leq k$.
As long as this value is below 1 we can be sure that random greedy coloring strategy succeeds with positive probability.
For $k=K$ we recover the result of \cite{CK15}.
When $K$ is bounded by a linear function of $k$, e.g. $K \leq \alpha k$ it is sufficient that $q$ does not exceed
\[
	\frac{1}{5} \left( \frac{k}{\ln(k)}\right)^{\frac{1}{\alpha+1}}
\]
The resulting bound for $q$ starts to be worse than the one from Theorem \ref{thm:main} when $K$ is roughly of the order $k\log(k)$.

\subsection{Uniform case}
It is instructive to observe how our analysis works for uniform hypergraphs.
We focus on modifications in the proof of our simple bound, since the ideas used for the improved bound do not help in the uniform case.
Using an obvious bound $R_k^e \leq k$, we improve inequality (\ref{eq:XlogY}) to
$
	X \leq \ln(\cp q).
$
Then we apply Lemma \ref{lemma:conv-expt} with $M=\ln(\cp q)$ and $\lambda= \frac{q}{k}$ obtaining
\[
	\probG[\text{$e$ becomes red}] < 2^{-k} \frac{q}{k} \exp(\ln(\cp q))= 2^{-k}  \frac{q^{1+\cp}}{k}.
\]
Since in this case the only bad event that we use is $\mathcal{B}$, we can afford to set $\cp=1+\epsilon$, for any small $\epsilon>0$.
We get that $\mathcal{B}$ does not happen with probability at least $\frac{\epsilon}{1+\epsilon}$.
Then
\[
	\probG[\text{$e$ becomes red}] < 2^{-k} \frac{q^{2+\epsilon}}{k} 
\]
and in order for this value to be at most $\frac{1}{2^k q} \cdot \frac{\epsilon}{1+\epsilon}$ it suffices that
\[
	q \leq k^{\frac{1}{3+\epsilon}} \cdot \left( \frac{\epsilon}{1+\epsilon} \right)^{\frac{1}{3+\epsilon}} .
\] 
This way we obtain a result analogous to that of Beck from \cite{Beck78} (i.e. $m(k) \geq k^{1/3-o(1)}  2^{k} $).
Incorporating the ideas from \cite{RS00} or \cite{CK15} that allowed to derive  a bound $m(k)=\Omega(\sqrt{k/\log(k)}) \cdot  2^k$ does not bring any significant improvement of our main result.

\subsection{Hypergraphs with random-like characteristics}
The weakest points of our analysis are the places where we apply Lemma \ref{lemma:conv-expt}.
The lemma works for any bounded non-negative random variable $X$.
It is clear from the bound that the worst case distribution of $X$ is the one that assumes only values $0$ and $M$.
The variables for which we apply the lemma are related to the numbers of initially monochromatic edges in hypergraphs $H_e$.
If these variables exhibit sufficiently strong concentration around their mean (like in the case of random hypergraphs) we may get much stronger bound than the one of Lemma \ref{lemma:conv-expt} and obtain results that are much closer to the case of uniform hypergaphs.

\bibliographystyle{alpha}
\bibliography{pb}

\end{document}